\newtheorem{theorem}{Theorem}[section]
\newtheorem{cor}[theorem]{Corollary}
\theoremstyle{definition}
\theoremstyle{remark}
\newtheorem{remark}[theorem]{Remark}
\numberwithin{equation}{section}
\begin{document}

\title{On isolated log canonical centers}

\author{Chih-Chi Chou}
\address{University of Illinois at Chicago, Department of Mathematics, Statistics, and Computer Science, Chicago IL 60607  }
\address{cchou20@uic.edu}

\begin{abstract}
In this paper, we show that the depth of an isolated log canonical center
is determined by the cohomology of the -1 discrepancy diviors over it.
A similar result also holds for normal isolated Du Bois singularities.
\end{abstract}

\maketitle

\section{Introduction}
\let\thefootnote\relax\footnote{{\it 2010 Mathematics Subject Classification}: 14B05, 14F17}
Singularities play a significant role in the minimal model 
program (mmp).
Among the different types of singularities, Kawamata log terminal (klt)
and log canonical (lc) are of particular importance.
Many fundamental theorems are first proved in the klt case, then extended
to the lc case.
And it is expected that lc  should be the largest class of singularities for which one can run mmp .

One of the major differences between klt and lc is that klt singularities are rational singularities,
 lc singularities are  Du Bois \cite{KK} but in general not rational.
So it is interesting and important to  know how far lc is from being rational.
Since rational implies Cohen-Macaulay, we can also ask if the variety $X$ is 
Cohen-Macaulay at some given point $p$.
Or more precisely, we can calculate $depth_p(\mathcal{O}_{X})$.
 
There are some known results regarding this direction.
For example, Fujino shows that given a lc pair $(X, \Delta)$ of dimension
at least three, then $depth_p(\mathcal{O}_{X})\ge min\{3,\mbox{codim}_pX\}$
if $\bar{p}$ is not a lc center ( Theorem 4.21 in \cite{OF}), which is first proved by Alexeev assuming that $p$ is a closed point and $X$ is projective (Lemma 3.2 in \cite{Al}).
Koll\'{a}r and Kov\'{a}cs generalized this result in \cite{KOL} and \cite{SK}, respectively,
but still under the assumption that $\bar{p}$ is not a lc center.
(See also \cite{AH} for result about closed points.)

In this paper, we investigate  a case when $\bar{p}$ is a lc center.
Assume that $p$ is an isolated lc center, after localization we assume 
$p$ is a closed point. 
It turns out that there is a delicate relation between  $depth_p(\mathcal{O}_{X})$
and the cohomology group of the exceptional divisors over $p$.
More precisely, given a log canonical pair $(X, \Delta)$ and an isolated lc center $p\in X$ which is a closed point,  
we take a log resolution $f:Y\rightarrow X$ such that
\begin{equation*}
K_Y=f^*(K_X+\Delta)+A-B-E.
\end{equation*} 
Here $A,B$ are effective and $\lfloor B \rfloor=0$,  $E$ is the reduced  divisor such that $f(E)=p$ .
Then we have the following,
\begin{theorem}(=Corollary \ref{CM})
For any integer $3\le t\le n$, we have depth$_p\mathcal{O}_X\ge t$  if and only if 
$\mbox{H}^{i-1}(E,\mathcal{O}_{E})=0, \forall 1<i<t.$ (Note that by assumption $X$ is normal, so we know depth$_p\mathcal{O}_X$ is at least two.)
\end{theorem}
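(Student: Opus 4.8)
The plan is to pass to the local ring at $p$ and to reinterpret the depth statement through local cohomology. Localizing, I may assume $X=\operatorname{Spec}\mathcal{O}_{X,p}$ with closed point $p$ and $\dim X=n$. Since $\operatorname{depth}_p\mathcal{O}_X=\min\{i:H^i_{\{p\}}(\mathcal{O}_X)\neq 0\}$, and $X$ is normal of dimension $n\ge t\ge 3$, the modules $H^0_{\{p\}}(\mathcal{O}_X)$ and $H^1_{\{p\}}(\mathcal{O}_X)$ vanish automatically; hence $\operatorname{depth}_p\mathcal{O}_X\ge t$ is equivalent to $H^j_{\{p\}}(\mathcal{O}_X)=0$ for all $2\le j\le t-1$. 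So the whole problem reduces to computing the local cohomology modules $H^j_{\{p\}}(\mathcal{O}_X)$ and matching them against the groups $H^{j-1}(E,\mathcal{O}_E)$.

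The heart of the argument is the identification $R^if_*\mathcal{O}_Y\cong H^i(E,\mathcal{O}_E)$ (a skyscraper at $p$) for every $i\ge 1$. First, because $p$ is an isolated log canonical center, the pair is klt away from $p$, so $X\setminus\{p\}$ has rational singularities and $R^if_*\mathcal{O}_Y$ is supported at $p$ for $i\ge 1$. To identify these finite length modules I would use that $X$ is Du Bois (lc $\Rightarrow$ Du Bois, \cite{KK}) together with the fact that the reduced snc divisor $E$ is itself Du Bois, and feed them into the standard exact triangle attached to the modification $f$ and the center $\{p\}$,
\[
\mathcal{O}_X\longrightarrow k(p)\oplus Rf_*\mathcal{O}_Y\longrightarrow Rf_*\mathcal{O}_E\xrightarrow{+1}.
\]
Applying $R\Gamma_{\{p\}}$ and using $R\Gamma_{\{p\}}(Rf_*\mathcal{O}_Y)=R\Gamma_E(Y,\mathcal{O}_Y)$ together with $R\Gamma_{\{p\}}(Rf_*\mathcal{O}_E)=R\Gamma(E,\mathcal{O}_E)$ (valid since $f(E)=p$), the skyscraper $k(p)$ drops out in positive degrees and one obtains, for $i\ge 1$, the long exact sequence
\[
\cdots\to H^i_{\{p\}}(\mathcal{O}_X)\to H^i_E(Y,\mathcal{O}_Y)\to H^i(E,\mathcal{O}_E)\to H^{i+1}_{\{p\}}(\mathcal{O}_X)\to H^{i+1}_E(Y,\mathcal{O}_Y)\to\cdots.
\]

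The second key input is the vanishing $H^i_E(Y,\mathcal{O}_Y)=0$ for all $i<n$. I would deduce this from Grauert--Riemenschneider vanishing, $R^jf_*\omega_Y=0$ for $j>0$, via formal (local) duality: the module $H^i_E(Y,\mathcal{O}_Y)$, which is supported at $p$, is Matlis dual to the completion of $R^{n-i}f_*\omega_Y$, and the latter vanishes exactly when $n-i>0$. Granting this, the displayed long exact sequence degenerates: for $1\le i\le n-2$ both neighboring terms $H^\bullet_E(Y,\mathcal{O}_Y)$ vanish, giving isomorphisms $H^{i+1}_{\{p\}}(\mathcal{O}_X)\cong H^i(E,\mathcal{O}_E)$, that is
\[
H^j_{\{p\}}(\mathcal{O}_X)\cong H^{j-1}(E,\mathcal{O}_E),\qquad 2\le j\le n-1.
\]
Since $t\le n$, this isomorphism holds throughout the range $2\le j\le t-1$, and combining it with the reduction of the first paragraph yields precisely $\operatorname{depth}_p\mathcal{O}_X\ge t\iff H^{i-1}(E,\mathcal{O}_E)=0$ for all $1<i<t$.

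The routine bookkeeping is the long exact sequence and the index matching; the two steps carrying the real content are the identification $R^if_*\mathcal{O}_Y\cong H^i(E,\mathcal{O}_E)$ and the vanishing of $H^i_E(Y,\mathcal{O}_Y)$ below degree $n$. I expect the first of these to be the main obstacle: making the Du Bois exact triangle genuinely applicable requires care, because $f$ is not an isomorphism away from $p$ (the log resolution modifies the klt locus as well), so the argument must either localize the triangle after exploiting that $Rf_*\mathcal{O}_Y\simeq\mathcal{O}_X$ over the rational locus $X\setminus\{p\}$, or replace the naive triangle by one adapted to the full non-isomorphism locus and then restrict to the germ at $p$. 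This is exactly the point at which the discrepancy decomposition $K_Y=f^*(K_X+\Delta)+A-B-E$ enters, guaranteeing that $E$ is the correct reduced divisor making the Du Bois and vanishing package function.
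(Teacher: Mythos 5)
Your skeleton has three steps: (a) the reduction of $\operatorname{depth}_p\mathcal{O}_X\ge t$ to vanishing of $H^j_p(\mathcal{O}_X)$ for $2\le j\le t-1$; (b) the vanishing of local cohomology of $Y$ with supports in the fiber below degree $n$; and (c) the identification $R^if_*\mathcal{O}_Y\cong H^i(E,\mathcal{O}_E)$ for $i\ge 1$. Steps (a) and (b) are sound --- indeed (b), via Grothendieck duality, Grauert--Riemenschneider and local duality, is essentially the same computation the paper runs, applied to $\mathcal{O}_Y$ instead of $\mathcal{O}_Y(-E)$. The genuine gap is exactly the step you flag and then defer: step (c). The exact triangle $\mathcal{O}_X\to k(p)\oplus Rf_*\mathcal{O}_Y\to Rf_*\mathcal{O}_E\xrightarrow{+1}$ exists only when $f$ is an isomorphism outside $\{p\}$ and $E$ is the \emph{full} reduced fiber $f^{-1}(p)_{\mathrm{red}}$. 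Here neither hypothesis holds: a log resolution of $(X,\Delta)$ must modify the klt locus as well (both the singularities of $X$ away from $p$ and the non-snc locus of $\Delta$, neither of which need be empty), and the $E$ in the statement is the discrepancy-$(-1)$ divisor, which can be strictly smaller than $f^{-1}(p)_{\mathrm{red}}$ (further blowups over $p$ create exceptional divisors of discrepancy $\ge 0$ in the fiber). Your two proposed repairs do not go through as sketched: localizing at $p$ does not reduce to an isolated singularity, since only the lc \emph{center} is isolated; and running the triangle with $\Sigma$ equal to the whole non-isomorphism locus replaces $k(p)$ by $\underline{\Omega}^0_\Sigma$ and $R\Gamma(E,\mathcal{O}_E)$ by local cohomology of $f^{-1}(\Sigma)$ along $f^{-1}(p)$, terms for which you have no handle. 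Also, in your long exact sequence the support subscript must be $f^{-1}(p)$, not $E$.

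This missing step is not bookkeeping: via $0\to\mathcal{O}_Y(-E)\to\mathcal{O}_Y\to\mathcal{O}_E\to 0$, the identification $R^if_*\mathcal{O}_Y\cong R^if_*\mathcal{O}_E$ for $i\ge 1$ (together with $f_*\mathcal{O}_E=k(p)$) is \emph{equivalent} to $R^if_*\mathcal{O}_Y(-E)=0$ for $i>0$, i.e.\ to Kov\'acs's vanishing theorem (Theorem \ref{KV}), which is precisely the key external input of the paper's argument and the reason the paper devotes its final section to reproving it. Moreover, the known proofs of that vanishing use strictly more than what you invoke: Kov\'acs's original proof uses Du Bois \emph{pairs} $(X,\text{non-klt locus})$, not merely the Du Bois property of $X$ from \cite{KK}, while Fujino's proof (reproduced in Section 4) uses dlt modifications, Szab\'o resolutions and Reid--Fukuda type vanishing. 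So as written you have assumed the crux of the theorem. To be fair: once Kov\'acs vanishing is granted, your route is correct and is a genuine repackaging --- the paper dualizes the quasi-isomorphism $f_*\mathcal{O}_Y(-E)\cong Rf_*\mathcal{O}_Y(-E)$ through Grothendieck and Matlis duality and finishes with Serre duality on $E$, whereas you obtain $H^j_p(\mathcal{O}_X)\cong H^{j-1}(E,\mathcal{O}_E)$ directly from a long exact sequence, with duality confined to step (b) --- but without that input the proof is incomplete at its central point.
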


This result generalizes Proposition 4.7 in \cite{OFI}, which gives a necessary and sufficient condition for an index one isolated log canonical singularity to be Cohen-Macaulay.   

We prove this theorem by showing that the local cohomology $\mbox{H}^i_p(\mathcal{O}_{X})$ is the Matlis dual of $\mbox{H}^{n-i}(E, K_E)$.
The same method applies to isolated Du Bois singularities, (see section 3.2). 
In the Du Bois case, $E$ denotes the reduced exceptional divisors.

The most crucial ingredient  of the proof is Kov\'{a}cs vanishing theorem,
which says that $R^if_*\mathcal{O}_Y(-E)=0, \forall i>0.$
With this theorem, we see that $f_*\mathcal{O}_Y(-E)$ is quasi isomorphic 
to $\mathcal{R}f_*\mathcal{O}_Y(-E)$.
By this quasi isomorphism and Grothendieck duality,  we are able  to see the relation between the local cohomology of $X$ and cohomology of $\mathcal{O}_E$.
Because of the significant role of Kov\'{a}cs's theorem in this paper, 
we give a quick proof of it  in the last section.  
This proof, based on Fujino's idea, only  uses Grothendieck duality and Kawamata-Viehweg vanishing theorem 
instead of  the notion of Du Bois pair in Kov\'{a}cs's original paper.

{\it Acknowledgements.}
I would like to thank  professor Osamu Fujino for discussions 
and answering many questions   by emails.
Moreover, this project is inspired by his paper \cite{OFI}.  
I would also like to thank professors Lawrence Ein, S\'{a}ndor  Kov\'{a}cs and Mihnea Popa for many useful discussions. 
I am also grateful to the referee for careful reading and many
useful comments.

\section{preliminaries}

Given a  pair $(X, \Delta)$, where $X$ is a normal variety and $\Delta$ is 
a $\mathbb{Q}-$linear combination of Weil divisors  so that $K_X+\Delta$ is $\mathbb{Q}$-Cartier.
Take a log resolution $f:Y\rightarrow X$, such that the exceptional locus 
and the strict transform $f_*^{-1}\Delta$ are simple normal crossing divisors.
We say the pair $(X, \Delta)$ is log canonical if 
\begin{equation*}
K_Y=f^*(K_X+\Delta)+A-B-E,
\end{equation*} 
where $A, B$ are effective, $\lfloor B \rfloor=0$ and $E$ is reduced.
We say $(X, \Delta)$ is  log terminal if $E$ is empty.

In this paper we consider log canonical pair, $(X,\Delta)$.
A sub variety $W\subset X$ is called log canonical center,
if there is a log resolution as above, and some component $E'\subset E$  
such that $f(E')=W$.

We recall Kov\'{a}cs vanishing theorem.
\begin{theorem} (Theorem 1.2 in \cite{KOV})\label{KV}
Let $(X, \Delta)$ be a log canonical pair and let $f:Y\rightarrow X$ be a proper birational morphism from a smooth variety $Y$ such that $\mbox{Ex}(f) \cup$ $\mbox{Supp} f_{*}^{-1}\Delta$ is a simple normal crossing divisor on $Y$.
If we write 
\begin{equation*}
K_Y=f^*(K_X+\Delta)+\sum _i a_iE_i
\end{equation*}
and put $E=\sum _{a_i=-1}E_i$, then 
\begin{equation*}
R^if_*\mathcal{O}_Y(-E)=0 
\end{equation*}
for every $i>0$.
\end{theorem}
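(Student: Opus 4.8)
The plan is to deduce the vanishing from a single application of the relative Kawamata--Viehweg vanishing theorem applied to a twist of $\mathcal{O}_Y(-E)$ by the positive part of the discrepancy, and then to remove that twist by means of relative Grothendieck duality for $f$. The reason for the detour is that $\mathcal{O}_Y(-E)$ itself is not directly accessible to vanishing, since the positive-discrepancy divisors appear with the wrong sign; the twisted sheaf repairs this.

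First I would fix the discrepancy decomposition. Writing $K_Y=f^*(K_X+\Delta)+\sum_i a_iE_i$, set $A=\sum_{a_i>0}a_iE_i$, $B=\sum_{-1<a_i<0}(-a_i)E_i$ and $E=\sum_{a_i=-1}E_i$, so that $K_Y=f^*(K_X+\Delta)+A-B-E$ with $A$ an effective integral $f$-exceptional divisor, $B$ effective with $\lfloor B\rfloor=0$, and $\mathrm{Supp}(A+B+E)$ simple normal crossing; in particular $A-E$ is Cartier. Applying relative Kawamata--Viehweg vanishing to the line bundle $\mathcal{O}_Y(A-E)$ with boundary $B$, one computes $(A-E)-(K_Y+B)=-f^*(K_X+\Delta)$, which is the pullback of a $\mathbb{Q}$-Cartier divisor, hence $f$-numerically trivial (so $f$-nef) and $f$-big because $f$ is birational; therefore $R^if_*\mathcal{O}_Y(A-E)=0$ for all $i>0$. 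Moreover $f_*\mathcal{O}_Y(A-E)=f_*\mathcal{O}_Y(-E)$: a rational function whose pullback is allowed poles only along the $f$-exceptional divisor $A$ has polar locus of codimension at least two on the normal variety $X$, hence is regular, so the extra room provided by $A$ produces no new global sections. Thus $Rf_*\mathcal{O}_Y(A-E)$ is the single sheaf $\mathcal{G}:=f_*\mathcal{O}_Y(-E)$.

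It remains to show that the higher direct images $R^if_*\mathcal{O}_Y(-E)$ also vanish, and the removal of the exceptional divisor $A$ is where I expect the real work to lie. I would transport the problem to the dual side by relative Grothendieck duality, which turns $Rf_*\mathcal{O}_Y(-E)$ and $Rf_*\mathcal{O}_Y(A-E)$ into $R\mathcal{H}om_X(-,\omega_X^{\bullet})$ of $Rf_*\omega_Y(E)$ and $Rf_*\omega_Y(E-A)$ respectively; since duality is an equivalence, it then suffices to prove that the natural map $Rf_*\omega_Y(E-A)\to Rf_*\omega_Y(E)$ is a quasi-isomorphism, i.e. that adding $A$ does not change the derived pushforward of $\omega_Y(E-A)$. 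The cokernel of $\omega_Y(E-A)\hookrightarrow\omega_Y(E)$ is, by adjunction $(K_Y+A)|_A=K_A$, of the form $\omega_A(E|_A)\otimes N_{A/Y}^{\vee}$, an object of exactly the same shape one dimension down, twisted by the $f$-nef conormal bundle of the contracted divisor $A$. I would therefore run an induction on $\dim X$ together with a Grauert--Riemenschneider/Kawamata--Viehweg vanishing on the exceptional strata, the required positivity coming from the fact that conormal bundles of $f$-exceptional divisors are $f$-nef. The main obstacle is controlling this inductive elimination of $A$ cleanly across all the simple normal crossing strata; the vanishing theorem input itself, by contrast, is immediate once the twist $A-E$ is in place.
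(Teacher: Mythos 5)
Your first two steps are essentially sound and in fact reproduce the skeleton of the paper's own argument, up to one slip: the positive part $A=\sum_{a_i>0}a_iE_i$ is a $\mathbb{Q}$-divisor, not an integral one, so $A-E$ need not be Cartier; you must work with $\lceil A\rceil$ and absorb $\lceil A\rceil-A$ into the fractional boundary. With that fix, relative Kawamata--Viehweg vanishing in its numerical form (KMM Theorem 1-2-5 and Remark 1-2-6) does give $R^if_*\mathcal{O}_Y(\lceil A\rceil-E)=0$ for $i>0$, one does have $f_*\mathcal{O}_Y(\lceil A\rceil-E)=f_*\mathcal{O}_Y(-E)$, and Grothendieck duality correctly reduces the theorem to showing that $\mathbf{R}f_*\omega_Y(E-\lceil A\rceil)\to\mathbf{R}f_*\omega_Y(E)$ is a quasi-isomorphism. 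The genuine gap is your last step, and it is not a technicality: given the first two steps, that quasi-isomorphism is \emph{equivalent} to the theorem, so the entire content of Kov\'acs vanishing sits exactly where you say you "expect the real work to lie." Your proposed mechanism for it fails on two counts. First, the positivity it rests on is false: conormal bundles of $f$-exceptional divisors need not be $f$-nef. Blow up a point of $\mathbb{C}^3$ and then a point on the exceptional $\mathbb{P}^2$; the strict transform $\tilde E_1\cong\mathbb{F}_1$ of the first exceptional divisor has conormal bundle $\pi^*\mathcal{O}_{\mathbb{P}^2}(1)\otimes\mathcal{O}_{\mathbb{F}_1}(e)$, of degree $-1$ on the contracted curve $e$. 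Second, on an arbitrary lc log resolution the complexes you are comparing genuinely have higher cohomology, so no stratum-by-stratum Grauert--Riemenschneider/KV induction can collapse them: for the cone $X$ over an abelian surface, with $E$ the exceptional divisor of the blow-up of the vertex, one has $R^1f_*\omega_Y(E)\cong H^1(E,\omega_E)\neq 0$. Moreover what you actually need is $\mathbf{R}f_*\bigl(\omega_Y(E)\otimes\mathcal{O}_{\lceil A\rceil}\bigr)=0$ \emph{including} the zeroth direct image, and GR/KV-type theorems only ever kill higher direct images.

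This is precisely the difficulty the paper's proof is engineered to sidestep: it never runs the duality argument on the given resolution. It first proves the vanishing for Szab\'o resolutions of dlt pairs, where the coefficient $-1$ part $\Delta'$ consists of strict transforms of boundary components rather than contracted divisors; there Reid--Fukuda vanishing applies (relative log bigness is automatic because the strata of $\Delta'$ are not contracted), so the middle complex $\mathbf{R}f_*\omega_Y(\Delta')$ is a single sheaf, and an elementary argument --- $\gamma^*$ is injective, the middle term is torsion-free of rank one, and the composite $\beta^*$ is an isomorphism --- forces $\gamma^*$ to be an isomorphism. The general lc case is then transferred to that special situation via a dlt modification, the Leray spectral sequence, and invariance of the higher direct images under suitable further blow-ups. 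Some replacement for that scaffolding, or a genuinely new argument for the vanishing of $\mathbf{R}f_*\bigl(\omega_Y(E)\otimes\mathcal{O}_{\lceil A\rceil}\bigr)$, is what your proposal is missing.
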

This theorem is first proved by notion of Du Bois pair under the assumption that 
$X$ is $\mathbb{Q}$-factorial.
The proof  is then simplified  in \cite{OF2} without assuming  $\mathbb{Q}$-factorial.

Now  we recall the duality theorems which will be used in this paper.
First we recall Grothendieck duality theorem (III.11.1, VII.3.4 in \cite{Har66}).
Let $f:Y\rightarrow X$ be a proper morphism between finite
dimensional noetherian schemes.
Suppose that both $X$ and $Y$ admit dualizing complexes, for example when they are quasi-projective varieties.
Then for any $\mathcal{F}^{\bullet}\in D^{-}_{qcoh}(Y)$, we have 
\begin{equation*}
Rf_*R\mathcal{H}om_Y(\mathcal{F}^{\bullet}, \omega ^{\bullet}_Y)\cong R\mathcal{H}om_X(Rf_*\mathcal{F}^{\bullet},\omega ^{\bullet}_X)
\end{equation*}
Here $\omega ^{\bullet}_X$ is  dualizing complex. 
Let $n$ be the dimension of $X$ and assume that $X$ is normal, then $h^{-n}(\omega ^{\bullet}_X):= \omega_X =\mathcal{O}_X(K_X) $, the extension of  regular $n$-forms on smooth locus.
In this paper we only consider normal varieties, so we will use $\omega_X$ and
$K_X$ interchangeably.
If $X$ is Cohen-Macaulay, then $h^i(\omega ^{\bullet}_X)=0, $ if $i\neq -n$, and 
$h^{-n}(\omega ^{\bullet}_X)=\omega _X$.
Or equivalently, $\omega ^{\bullet}_X=\omega_X[n].$

Now we recall local duality (V.6.2 in \cite{Har66}). 
Suppose that $(R,p)$ is a local
ring. An injective hull $I$ of the residue field $k=R/p$ is a an injective $R$ module $I$ 
such that 
for any non-zero submodule $N\subset I$ we have $N \cap k\neq 0$.
(See \cite{BH} Proposition 3.2.2. for more discussion.)
Matlis duality says that the functor $Hom(\cdot, I)$ is a faithful exact 
functor on the category of Noetherian $R$ modules.

\begin{theorem}(Local duality )
Let $(R,p)$ be a local ring and $\mathcal{F}^{\bullet}\in D^{+}_{coh}(R)$.
Then 
\begin{equation*}
\mathbf{R}\Gamma_p(\mathcal{F}^{\bullet})\rightarrow \mathbf{R}Hom
(\mathbf{R}Hom(\mathcal{F}^{\bullet}, \omega _R ^{\bullet}), I)
\end{equation*}
is an isomorphism.
\end{theorem}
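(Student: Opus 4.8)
The statement is the classical Grothendieck local duality theorem, and the plan is to produce a natural morphism realizing the claimed isomorphism and then reduce its verification to a single test object by a \emph{way-out functor} argument. Throughout I would write $D(-)=\mathrm{Hom}_R(-,I)$ for Matlis duality, which is exact because $I$ is injective (so $\mathbf{R}Hom(-,I)=\mathrm{Hom}(-,I)$ with no higher terms), and $D_R(-)=\mathbf{R}Hom(-,\omega_R^{\bullet})$ for the dualizing functor. There is no loss in assuming $R$ complete, since $\mathbf{R}\Gamma_p$, the modules $\mathrm{Ext}^{\bullet}(\mathcal{F}^{\bullet},\omega_R^{\bullet})$, and $I=E_R(R/p)$ are all compatible with $p$-adic completion. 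The two structural inputs I would rely on are biduality for the dualizing complex, namely that the natural map $\mathcal{F}^{\bullet}\to D_R(D_R(\mathcal{F}^{\bullet}))$ is an isomorphism on $D^{b}_{coh}(R)$, and the trace isomorphism $\mathbf{R}\Gamma_p(\omega_R^{\bullet})\cong I$ placing $I$ in cohomological degree $0$.

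First I would construct the candidate morphism as the composite
\begin{equation*}
\mathbf{R}\Gamma_p(\mathcal{F}^{\bullet})\cong \mathbf{R}\Gamma_p\bigl(D_R(D_R(\mathcal{F}^{\bullet}))\bigr)\to \mathbf{R}Hom\bigl(D_R(\mathcal{F}^{\bullet}),\ \mathbf{R}\Gamma_p(\omega_R^{\bullet})\bigr)\to \mathbf{R}Hom\bigl(D_R(\mathcal{F}^{\bullet}),\ I\bigr),
\end{equation*}
in which the first arrow is the biduality map, the second is the canonical map pulling local cohomology inside $\mathbf{R}Hom$ out of the first variable, and the third is induced by the trace. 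The middle map is an isomorphism whenever the first variable is a coherent complex, because $\mathbf{R}\Gamma_p$ can be computed by a finite \v{C}ech complex of flat modules and $\mathrm{Hom}(G,-)$ commutes with tensoring by a flat module once $G$ is finitely presented. The target of the composite is exactly $\mathbf{R}Hom(\mathbf{R}Hom(\mathcal{F}^{\bullet},\omega_R^{\bullet}),I)$, the right-hand side of the theorem, so this furnishes a natural transformation of functors in $\mathcal{F}^{\bullet}$.

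Next I would check that both functors are way-out in both directions: $\mathbf{R}\Gamma_p$ because its cohomological dimension is bounded by the number of generators of $p$, and $D\circ D_R$ because $\omega_R^{\bullet}$ has finite injective dimension while $D$ is exact. By the way-out lemma of \cite{Har66} (I.7.1), a natural transformation of such functors is an isomorphism on all of $D^{+}_{coh}(R)$ as soon as it is an isomorphism on the single module $R$, and hence on all finite free modules. For $\mathcal{F}^{\bullet}=R$ the composite specializes, using $D_R(R)=\omega_R^{\bullet}$ and $D_R(\omega_R^{\bullet})\cong R$, to a map $\mathbf{R}\Gamma_p(R)\to \mathbf{R}Hom(\omega_R^{\bullet},I)$ whose three constituent arrows are each isomorphisms: biduality applied to $R$, the pass-inside map for the coherent complex $\omega_R^{\bullet}$, and the trace isomorphism.

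The hard part will be the base case, and specifically the trace isomorphism $\mathbf{R}\Gamma_p(\omega_R^{\bullet})\cong I$, since this is where the actual structure of the dualizing complex must enter. Using the presentation of the normalized $\omega_R^{\bullet}$ as a complex of injective hulls, with $\bigoplus_{\dim R/\mathfrak{q}=i}E_R(R/\mathfrak{q})$ in cohomological degree $-i$, one observes that $\Gamma_p$ annihilates every summand $E_R(R/\mathfrak{q})$ with $\mathfrak{q}\neq p$ and preserves the single summand $E_R(R/p)=I$ sitting in degree $0$; checking that the induced differentials vanish then collapses $\mathbf{R}\Gamma_p(\omega_R^{\bullet})$ to $I$ in degree $0$. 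Granting this, together with the routine commutation of local cohomology with $\mathbf{R}Hom$ out of a coherent complex, the way-out reduction completes the argument.
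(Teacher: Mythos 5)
This statement is one the paper never proves: it is recalled in the preliminaries as the classical local duality theorem, with an explicit citation to V.6.2 of \cite{Har66}, and is then used as a black box (via its cohomology-level consequence) in the proofs of Theorem \ref{dual} and Corollary \ref{CM}. So the honest comparison is with the cited source, and against that your proposal fares well: it is essentially Hartshorne's own argument. The ingredients you isolate --- the natural transformation built from biduality, the pass-inside map $\mathbf{R}\Gamma_p(\mathbf{R}Hom(G,-))\to \mathbf{R}Hom(G,\mathbf{R}\Gamma_p(-))$ for coherent $G$, the trace isomorphism $\mathbf{R}\Gamma_p(\omega_R^{\bullet})\cong I$, and the way-out reduction (I.7.1 of \cite{Har66}) to the single test object $R$ --- are exactly how V.6.2 is proved. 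Your justifications of the individual steps are sound: the pass-inside map is an isomorphism because $\mathbf{R}\Gamma_p$ is computed by the finite stable Koszul (\v{C}ech) complex of flat modules on generators of $p$, against which $Hom$ out of a finitely presented complex commutes; and in the trace computation $\Gamma_p$ does kill $E_R(R/\mathfrak{q})$ for $\mathfrak{q}\neq p$, since a nonzero $p$-power-torsion submodule would have $p$ as an associated prime while $\mathrm{Ass}\, E_R(R/\mathfrak{q})=\{\mathfrak{q}\}$, and the surviving summand $E_R(R/p)=I$ sits only in degree $0$ of the residual complex, so no differentials survive. Two minor remarks. First, the reduction to complete $R$ is harmless but superfluous: nothing in the rest of your argument uses completeness, and the cited theorem is proved without it. Second, strictly speaking you should check that your composite agrees with the canonical map asserted in the statement (Hartshorne's map comes from the evaluation pairing); this is a routine compatibility, and in any case the paper only ever uses the induced isomorphisms $\mbox{H}^i_p(\mathcal{F}^{\bullet})\cong Hom(\mbox{H}^{-i}(\mathbf{R}Hom(\mathcal{F}^{\bullet},\omega_R^{\bullet})),I)$, for which any natural isomorphism suffices.
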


In particular, if we take $i$-th cohomology on both hand sides,
we have
\begin{equation*}
\mbox{H}^i_p(\mathcal{F}^{\bullet})\cong Hom (\mbox{H}^{-i}(\mathbf{R}Hom(\mathcal{F}^{\bullet}, \omega _R ^{\bullet})),I)
\end{equation*}
The $-i$ comes from switching the cohomology functor $\mbox{H}^i(\cdot)$ and $Hom(\cdot, I)$.

\section{ Main Results}
\subsection{Depth of LC center}
Given a log canonical pair $(X, \Delta)$, and an isolated lc center $p\in X$ which is a closed point.
Without loss of generality, we assume $X$ is an affine space and $p$ is the only closed point.
By definition, we have a log resolution $f:Y\rightarrow X$ such that
\begin{equation*}
K_Y=f^*(K_X+\Delta)+A-B-E.
\end{equation*} 
Here $A,B$ are effective and $\lfloor B \rfloor=0$,  $E$ is the reduced exceptional divisor such that $f(E)=p$ .

\begin{theorem}\label{dual}
For $1<i<n$, $\mbox{H}_p^{i}(X,\mathcal{O}_X)$ is dual to $\mbox{H}^{n-i}(E, K_E)$ by Matlis duality.
For $i=n$, $\mbox{H}_p^n(X,\mathcal{O}_X)$ is dual to $f_*\mathcal{O}_Y(K_Y+E)$.

\end{theorem}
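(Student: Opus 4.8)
The plan is to reduce the computation of $\mathrm{H}^i_p(X,\mathcal{O}_X)$ to a cohomology computation on $Y$ through the ideal sheaf $f_*\mathcal{O}_Y(-E)$, and then to dualize. First I would exploit that $\mathcal{O}_Y(-E)=\mathcal{O}_Y$ away from the fiber over $p$, so that the inclusion $f_*\mathcal{O}_Y(-E)\hookrightarrow f_*\mathcal{O}_Y=\mathcal{O}_X$ (using normality of $X$) has quotient $Q:=\mathcal{O}_X/f_*\mathcal{O}_Y(-E)$ supported on $\{p\}$. For such a module $\mathrm{H}^0_p(Q)=Q$ and $\mathrm{H}^j_p(Q)=0$ for $j>0$, so the long exact sequence of local cohomology attached to $0\to f_*\mathcal{O}_Y(-E)\to\mathcal{O}_X\to Q\to 0$ gives isomorphisms $\mathrm{H}^i_p(X,\mathcal{O}_X)\cong\mathrm{H}^i_p(X,f_*\mathcal{O}_Y(-E))$ for all $i\ge 2$, which covers the whole range $1<i\le n$.

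Next I would invoke Kov\'{a}cs vanishing (Theorem \ref{KV}), which applies because $p$ being an isolated lc center forces $E=\sum_{a_i=-1}E_i$ after localization. It yields a quasi-isomorphism $f_*\mathcal{O}_Y(-E)\cong\mathbf{R}f_*\mathcal{O}_Y(-E)$. Feeding this into local duality, $\mathrm{H}^i_p(X,f_*\mathcal{O}_Y(-E))$ is Matlis dual to $\mathrm{H}^{-i}\bigl(\mathbf{R}\mathcal{H}om_X(\mathbf{R}f_*\mathcal{O}_Y(-E),\omega_X^\bullet)\bigr)$. Now Grothendieck duality for the proper morphism $f$ moves the dual onto $Y$: $\mathbf{R}\mathcal{H}om_X(\mathbf{R}f_*\mathcal{O}_Y(-E),\omega_X^\bullet)\cong\mathbf{R}f_*\,\mathbf{R}\mathcal{H}om_Y(\mathcal{O}_Y(-E),\omega_Y^\bullet)$. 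Since $Y$ is smooth of dimension $n$ we have $\omega_Y^\bullet=\mathcal{O}_Y(K_Y)[n]$, so the inner term is $\mathcal{O}_Y(K_Y+E)[n]$, and after taking $\mathrm{H}^{-i}$ over the affine base $X$ the shift converts this into $\mathrm{H}^{n-i}(Y,\mathcal{O}_Y(K_Y+E))$.

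It remains to pass from $Y$ to $E$. Here I would use the adjunction sequence $0\to\mathcal{O}_Y(K_Y)\to\mathcal{O}_Y(K_Y+E)\to K_E\to 0$, where $K_E=\mathcal{O}_Y(K_Y+E)|_E$ is the dualizing sheaf of the reduced simple normal crossing divisor $E$. The key vanishing is $\mathrm{H}^j(Y,\omega_Y)=0$ for all $j>0$: since $R^jf_*\omega_Y=0$ for $j>0$ by Grauert--Riemenschneider and $X$ is affine, $\mathrm{H}^j(Y,\omega_Y)=\mathrm{H}^j(X,f_*\omega_Y)=0$. For $1<i<n$ both $\mathrm{H}^{n-i}(Y,\omega_Y)$ and $\mathrm{H}^{n-i+1}(Y,\omega_Y)$ vanish, so the long exact sequence gives $\mathrm{H}^{n-i}(Y,\mathcal{O}_Y(K_Y+E))\cong\mathrm{H}^{n-i}(E,K_E)$, yielding the first claim. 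For $i=n$ only $\mathrm{H}^1(Y,\omega_Y)=0$ is available while $\mathrm{H}^0(Y,\omega_Y)\ne 0$ in general; here one reads off directly $\mathrm{H}^0(Y,\mathcal{O}_Y(K_Y+E))=f_*\mathcal{O}_Y(K_Y+E)$, which is exactly the stated dual.

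The routine ingredients (the long exact sequences, adjunction, Grauert--Riemenschneider) are standard; the main obstacle I expect is the careful bookkeeping in the middle step---namely checking that the quasi-isomorphism from Kov\'{a}cs is compatible with Grothendieck duality in the derived category and that the dualizing-complex shift $[n]$ combines correctly with the index $-i$ coming from local duality, so that degrees land on $\mathrm{H}^{n-i}$. This degree tracking is also what cleanly separates the generic range $1<i<n$ from the boundary case $i=n$.
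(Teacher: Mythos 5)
Your proposal is correct and follows essentially the same route as the paper's own proof: reduce to $\mathrm{H}^i_p(X,f_*\mathcal{O}_Y(-E))$ via the quotient supported at $p$, use Kov\'{a}cs vanishing to replace $f_*\mathcal{O}_Y(-E)$ by $\mathbf{R}f_*\mathcal{O}_Y(-E)$, apply Grothendieck duality plus local (Matlis) duality to land on $R^{n-i}f_*\mathcal{O}_Y(K_Y+E)$, and then identify this with $\mathrm{H}^{n-i}(E,K_E)$ for $1<i<n$ (respectively $f_*\mathcal{O}_Y(K_Y+E)$ for $i=n$) via the adjunction sequence and Grauert--Riemenschneider. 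The only cosmetic difference is that you phrase the last step as global cohomology on $Y$ over the affine base where the paper works with higher direct image sheaves, and you order the steps differently; these are equivalent.
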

\begin{proof}
Push forward  the following exact sequence on $Y$,
\begin{equation*}
0\rightarrow K_Y\rightarrow K_Y(E)\rightarrow K_{E}\rightarrow 0.
\end{equation*}
By Grauert-Riemenschneider vanishing, we have $R^{n-i}f_*\mathcal{O}_Y(K_Y+E)\cong \mbox{H}^{n-i}(E, K_E)$ for $i<n$ .
So to prove the statement, it suffices to prove the duality between  $\mbox{H}_p^i(X,\mathcal{O}_X)$ and
  $R^{n-i}f_*\mathcal{O}_Y(K_Y+E)\cong \mbox{H}^{n-i}(E, K_E)$.
To this end, we consider the quasi isomorphism $f_*\mathcal{O}_Y(-E)\cong _{quasi}\mathbf{R}f_*\mathcal{O}_Y(-E)$  implied by  Kov\'{a}cs vanishing theorem.
Apply Grothendieck duailty, we have
\begin{equation*}
\mathbf{R}Hom(f_*\mathcal{O}_Y(-E), \omega _X ^{\bullet})\cong_{quasi}
\mathbf{R}Hom(\mathbf{R}f_*\mathcal{O}_Y(-E), \omega _X ^{\bullet})\cong_{quasi}\mathbf{R}f_*\omega_Y^{\bullet}(E)
\end{equation*}
Take $-i$th cohomology, we have 
\begin{equation}\label{one}
Ext^{-i}(f_*\mathcal{O}_Y(-E), \omega _X ^{\bullet})\cong R^{n-i}f_*\mathcal{O}_Y(K_Y+E)
\end{equation}
By Matlis duality, the left hand side is isomorphic to Hom$(\mbox{H}_p^i(f_*\mathcal{O}_Y(-E)), I )$, where $I$ is injective hull of $k$ .

To prove the  statement, we claim that  $\mbox{H}_p^i(f_*\mathcal{O}_Y(-E))\cong \mbox{H}_p^i(\mathcal{O}_X)$ for $i>1$.
This follows from the following exact sequence 
\begin{equation*}
0\rightarrow f_*\mathcal{O}_Y(-E)\rightarrow \mathcal{O}_X \rightarrow \mathcal{O}_p\rightarrow 0,
\end{equation*}
and the fact that $\mbox{H}^i(\mathcal{O}_p)= 0$ iff $i>0.$

\end{proof}

\begin{cor}\label{CM}
For any integer $3\le t\le n$, we have depth$_p\mathcal{O}_X\ge t$  if and only if 
$\mbox{H}^{i-1}(E,\mathcal{O}_{E})=0, \forall 1<i<t.$ (Note that by assumption $X$ is normal, so we know depth$_p\mathcal{O}_X$ is at least two.)
\end{cor}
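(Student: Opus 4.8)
The plan is to translate the depth condition into the vanishing of local cohomology, run that vanishing through Theorem \ref{dual}, and then match the cohomology of $K_E$ against the cohomology of $\mathcal{O}_E$ by Serre duality on $E$. Recall that depth is measured by local cohomology, $\operatorname{depth}_p\mathcal{O}_X=\min\{i:\mathrm{H}^i_p(\mathcal{O}_X)\neq 0\}$, so the inequality $\operatorname{depth}_p\mathcal{O}_X\ge t$ is equivalent to the vanishing $\mathrm{H}^i_p(\mathcal{O}_X)=0$ for all $i<t$. Since $X$ is normal it satisfies Serre's condition $S_2$, whence $\operatorname{depth}_p\mathcal{O}_X\ge 2$ and $\mathrm{H}^0_p(\mathcal{O}_X)=\mathrm{H}^1_p(\mathcal{O}_X)=0$ automatically. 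Thus for $3\le t\le n$ the condition $\operatorname{depth}_p\mathcal{O}_X\ge t$ reduces to $\mathrm{H}^i_p(\mathcal{O}_X)=0$ for every $i$ with $2\le i\le t-1$.

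For these indices one has $1<i<n$, because $t\le n$ forces $i\le t-1\le n-1$, so Theorem \ref{dual} applies and shows that $\mathrm{H}^i_p(\mathcal{O}_X)$ is Matlis dual to $\mathrm{H}^{n-i}(E,K_E)$. By the faithful exactness of Matlis duality, $\mathrm{H}^i_p(\mathcal{O}_X)=0$ if and only if $\mathrm{H}^{n-i}(E,K_E)=0$. What remains is to identify the vanishing of $\mathrm{H}^{n-i}(E,K_E)$ with the vanishing of $\mathrm{H}^{i-1}(E,\mathcal{O}_E)$ in the statement.

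For this I would invoke Serre duality on $E$. The divisor $E$ is reduced and, as an effective Cartier divisor on the smooth variety $Y$, is locally a hypersurface, hence Gorenstein of pure dimension $n-1$; in particular $K_E=\omega_E$ is a dualizing line bundle, given by adjunction as $\omega_E=\omega_Y(E)|_E$ (consistent with the exact sequence used in the proof of Theorem \ref{dual}). Moreover $f(E)=p$ forces $E\subset f^{-1}(p)$, so $E$ is proper, indeed projective. Serre duality then gives a perfect pairing between $\mathrm{H}^{n-i}(E,K_E)$ and $\mathrm{H}^{(n-1)-(n-i)}(E,\mathcal{O}_E)=\mathrm{H}^{i-1}(E,\mathcal{O}_E)$, so these two groups vanish simultaneously.

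Chaining the three equivalences, $\operatorname{depth}_p\mathcal{O}_X\ge t$ holds if and only if $\mathrm{H}^{i-1}(E,\mathcal{O}_E)=0$ for all $2\le i\le t-1$, which is exactly the asserted range $1<i<t$. The step I expect to demand the most care is the application of Serre duality on the possibly singular divisor $E$: one must verify that $E$ is Gorenstein, so that $K_E$ really is a dualizing sheaf and duality holds in the clean form $\mathrm{H}^{j}(E,\omega_E)^\vee\cong \mathrm{H}^{(n-1)-j}(E,\mathcal{O}_E)$ used above, and that $E$ is proper so that the duality is available at all. Everything else is a bookkeeping of indices together with the faithful exactness already recorded for Matlis duality.
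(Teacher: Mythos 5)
Your proof is correct and takes essentially the same approach as the paper's: translate the depth condition into vanishing of local cohomology (using normality to dispose of $i\le 1$), apply the Matlis duality of Theorem \ref{dual} together with its faithful exactness, and convert $\mbox{H}^{n-i}(E,K_E)$ into $\mbox{H}^{i-1}(E,\mathcal{O}_E)$ by Serre duality on $E$. Your extra care in justifying Serre duality on the possibly singular divisor $E$ (Gorenstein because it is a divisor on the smooth variety $Y$, proper because $f(E)=p$) is a detail the paper leaves implicit.
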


\begin{proof}
In the proof of Theorem \ref{dual}, we showed $\mbox{H}_p^i(f_*\mathcal{O}_Y(-E))\cong \mbox{H}_p^i(\mathcal{O}_X)$ for $i>1$.  
So for any integer $3\le t\le n$, we have  
\begin{eqnarray*}
\mbox{depth}_p\mathcal{O}_ X\ge t &\Leftrightarrow& \mbox{H}_p^i(X,\mathcal{O}_X)=0,\forall i<t\\
&\Leftrightarrow & \mbox{H}^i_p(X, f_*\mathcal{O}_Y(-E))=0, \forall 1<i<t\\
&\Leftrightarrow& \mbox{H}^{n-i}(E,K_{E})=0, \forall 1<i<t (\mbox{Matlis duality and Equation (\ref{one}))}\\
&\Leftrightarrow& \mbox{H}^{i-1}(E,\mathcal{O}_{E})=0,  \forall 1<i<t.(\mbox{Serre Duality})
\end{eqnarray*}
\end{proof}

\begin{remark}
The cohomology group $\mbox{H}^i(E, \mathcal{O}_{E})$ is independent of resolution, because  
$\mbox{H}^i(E, \mathcal{O}_{E})\cong R^if_*\mathcal{O}_Y$ by Kov\'{a}cs vanishing theorem.
And that $R^if_*\mathcal{O}_Y$ is well known to be independent of resolution.
\end{remark}

 \begin{cor} (Proposition 4.7 \cite{OFI})
 Given a closed isolated lc center $p$ of a pair $(X,\Delta)$, then $X$ is Cohen-Macauley at $p$ if and only if 
 $\mbox{H}^i(E, \mathcal{O}_{E})=0, \forall 0<i<n-1$. 
 \end{cor}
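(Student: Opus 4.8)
The plan is to read this off directly from Corollary \ref{CM} by specializing $t=n$. First I would recall that, since $p$ is a closed point of the $n$-dimensional normal variety $X$, the local ring $\mathcal{O}_{X,p}$ has dimension $n$; as depth never exceeds dimension, being Cohen-Macaulay at $p$ is precisely the condition $\mbox{depth}_p\mathcal{O}_X\ge n$. So the corollary is nothing more than the $t=n$ instance of Corollary \ref{CM}, reinterpreted through this identification.

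Assuming $n\ge 3$, I would then apply Corollary \ref{CM} with $t=n$. This yields $\mbox{depth}_p\mathcal{O}_X\ge n$ if and only if $\mbox{H}^{i-1}(E,\mathcal{O}_E)=0$ for all $1<i<n$. The only remaining step is a shift of summation index: setting $j=i-1$ translates the range $1<i<n$ into $0<j<n-1$, so the condition becomes $\mbox{H}^{j}(E,\mathcal{O}_E)=0$ for all $0<j<n-1$, which is exactly the asserted vanishing.

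The one case not covered by Corollary \ref{CM}, which requires $t\ge 3$, is $n=2$. But there the stated range $0<i<n-1=1$ is empty, so the cohomological hypothesis is vacuous; and a normal surface is automatically Cohen-Macaulay (indeed $\mbox{depth}_p\mathcal{O}_X\ge 2=n$ by normality, as noted parenthetically after Corollary \ref{CM}), so both sides of the equivalence hold trivially and the statement is still correct.

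I do not expect any genuine obstacle here: all of the cohomological and duality input has already been absorbed into Theorem \ref{dual} and Corollary \ref{CM}, so what remains is the purely formal identification of \emph{Cohen-Macaulay at $p$} with the top depth bound $\mbox{depth}_p\mathcal{O}_X\ge n$, together with the reindexing above. The mildest point of care is simply making sure the index ranges line up after the substitution $j=i-1$, which they do.
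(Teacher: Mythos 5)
Your proof is correct and is exactly the route the paper intends: the corollary is stated without proof precisely because it is the $t=n$ case of Corollary \ref{CM}, combined with the identification of Cohen--Macaulayness at $p$ with $\mathrm{depth}_p\mathcal{O}_X\ge n=\dim\mathcal{O}_{X,p}$ and the index shift $j=i-1$. Your handling of the boundary case $n=2$ (vacuous vanishing condition, normality forcing depth $2$) is a careful touch that the paper leaves implicit.
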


\subsection{Normal isolated Du Bois singularity}

The notion of Du Bois singularities is a generalization of the notion of rational singularities.
 For a proper scheme of finite type $X$ there exists a complex $\underline{\Omega}_X^{\bullet}$,
 which is an analogue of De Rham complex.  
Roughly speaking , $X$ is said to have Du Bois singularities if the natural map $\mathcal{O}_X\rightarrow \underline{\Omega}_X^0 $ is a quasi isomorphism.
We refer the reader to \cite{KS11} and the reference there for more discussions.

In this subsection we consider the case where $(X, p) $ is a normal isolated Du Bois singularity of dimension $n$,
and $f:Y \rightarrow X$ is a log resolution such that $f$ is an isomorphism outside of $p$.
We claim that the idea in the previous subsection can be applied to this case.
The crucial fact we need is the following,

\begin{theorem} (Theorem 6.1 in \cite{KS11})
Take a log  resolution $f:Y\rightarrow X$ as above, and let $E$ be the reduced preimage of $p$. 
Then $(X,p)$ is a normal Du Bois singularity if and only if the natural map
\begin{equation*}
R^if_*\mathcal{O}_Y\rightarrow R^if_*\mathcal{O}_E
 \end{equation*} 
 is an isomorphism for all $i>0$.
\end{theorem}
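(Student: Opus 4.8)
The plan is to reduce the statement to the fundamental distinguished triangle for the zeroth piece of the Du Bois complex attached to the resolution $f$, together with the known good behaviour of $\underline{\Omega}^0$ for smooth schemes, for reduced simple normal crossing schemes, and for a point. First I would use the descent property of the Du Bois complex (from cubical hyperresolutions, in the sense of Guill\'en--Navarro Aznar): since $f$ is proper, an isomorphism over $X\setminus\{p\}$, and $E=f^{-1}(p)$ is its reduced exceptional locus, the square formed by $E\hookrightarrow Y$, $\{p\}\hookrightarrow X$ and $f$ is acyclic, and produces a distinguished triangle
\begin{equation*}
\underline{\Omega}_X^0 \to \mathbf{R}f_*\underline{\Omega}_Y^0 \oplus \underline{\Omega}_{\{p\}}^0 \to \mathbf{R}f_*\underline{\Omega}_E^0 \xrightarrow{+1}.
\end{equation*}
I would then simplify the three terms: $Y$ smooth gives $\underline{\Omega}_Y^0\cong\mathcal{O}_Y$; the reduced point gives $\underline{\Omega}_{\{p\}}^0\cong\mathcal{O}_p$; and $E$ reduced with simple normal crossings is Du Bois, so $\underline{\Omega}_E^0\cong\mathcal{O}_E$. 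Hence the triangle reads
\begin{equation*}
\underline{\Omega}_X^0 \to \mathbf{R}f_*\mathcal{O}_Y \oplus \mathcal{O}_p \xrightarrow{\phi} \mathbf{R}f_*\mathcal{O}_E \xrightarrow{+1},
\end{equation*}
where on the $\mathbf{R}f_*\mathcal{O}_Y$ summand the map $\phi$ is induced by the restriction $\mathcal{O}_Y\to\mathcal{O}_E$.

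Next I would read off the degree-zero behaviour. As $X$ is normal, $f_*\mathcal{O}_Y=\mathcal{O}_X$, and as $E$ is connected and proper over $p$ one has $f_*\mathcal{O}_E\cong\mathcal{O}_p$; thus $\phi^0\colon\mathcal{O}_X\oplus\mathcal{O}_p\to\mathcal{O}_p$ is surjective, so taking $h^0$ of the triangle recovers $h^0(\underline{\Omega}_X^0)\cong\mathcal{O}_X$. Since the natural map $\mathcal{O}_X\to\underline{\Omega}_X^0$ is an isomorphism in degree zero, the Du Bois condition---that this map be a quasi-isomorphism---is equivalent to the vanishing $h^i(\underline{\Omega}_X^0)=0$ for all $i>0$.

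Finally I would run the long exact sequence of cohomology sheaves of the simplified triangle. In degrees $i\ge 1$ the summand $\mathcal{O}_p$ is concentrated in degree $0$ and drops out, so the relevant maps are exactly the natural ones $\phi^i\colon R^if_*\mathcal{O}_Y\to R^if_*\mathcal{O}_E$, and the sequence splits into
\begin{equation*}
0 \to \operatorname{coker}(\phi^{i-1}) \to h^i(\underline{\Omega}_X^0) \to \ker(\phi^i) \to 0 .
\end{equation*}
Because $\phi^0$ is already surjective, the $i=1$ cokernel term vanishes, and a short bookkeeping shows that $h^i(\underline{\Omega}_X^0)=0$ for every $i>0$ holds if and only if each $\phi^i$ is both injective and surjective, i.e. an isomorphism, for every $i>0$. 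This is precisely the asserted equivalence. I expect the only real difficulty to lie in the first step: justifying the fundamental triangle genuinely requires the theory of the Du Bois complex and the verification that the resolution data form an acyclic square. Once that triangle is in place, steps two and three are a routine diagram chase, entirely parallel to the Matlis-duality argument used in the log canonical case above.
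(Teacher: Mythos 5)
The paper does not actually prove this statement: it is quoted as Theorem 6.1 of \cite{KS11} and used as a black box, so there is no internal proof to compare yours against. Your argument is correct, and it is in essence the argument that lies behind the cited result: the acyclic-square (descent) distinguished triangle for $\underline{\Omega}^0$ associated to the square formed by $E\subset Y$, $\{p\}\subset X$ and $f$, the identifications $\underline{\Omega}^0_Y\cong\mathcal{O}_Y$, $\underline{\Omega}^0_{\{p\}}\cong\mathcal{O}_p$, $\underline{\Omega}^0_E\cong\mathcal{O}_E$ (smooth, point, and reduced snc schemes are Du Bois), and then the long exact sequence of cohomology sheaves. Two small points deserve explicit justification in your write-up. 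First, $f_*\mathcal{O}_E\cong\mathcal{O}_p$ requires $E$ to be connected; this follows from Zariski's main theorem, since $X$ is normal and $f$ is proper birational, so all fibers of $f$ are connected. Second, your reduction of the Du Bois condition to the vanishing $h^i(\underline{\Omega}^0_X)=0$ for $i>0$ needs the degree-zero isomorphism to be realized by the \emph{natural} map $\mathcal{O}_X\to h^0(\underline{\Omega}^0_X)$: you should either check that the isomorphism $\ker(\phi^0)\cong\mathcal{O}_X$ you produce is this natural map (it is, by functoriality: composing $\mathcal{O}_X\to\underline{\Omega}^0_X$ with projection onto the first summand gives the canonical map $\mathcal{O}_X\to\mathbf{R}f_*\mathcal{O}_Y$, which on $h^0$ is the identity because $X$ is normal), or simply invoke the standard fact that $h^0(\underline{\Omega}^0_X)$ is the structure sheaf of the seminormalization and that normal varieties are seminormal. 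Granting these, your bookkeeping in the long exact sequence --- injectivity of $\phi^i$ from $\ker(\phi^i)\subset h^i(\underline{\Omega}^0_X)$, surjectivity from $\operatorname{coker}(\phi^{i-1})\subset h^i(\underline{\Omega}^0_X)$, and the disappearance of the $i=1$ cokernel term because $\phi^0$ is onto --- is exactly right, and gives the asserted equivalence.
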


This theorem implies that $R^if_*\mathcal{O}_Y(-E)=o, \forall i>0.$
That is 
\begin{equation*}
f_*\mathcal{O}_Y(-E)\cong_{quasi}\mathbf{R} f_*\mathcal{O}_Y(-E)
\end{equation*}
Then exactly the same proof as in previous section yields

\begin{theorem}
Given $(X, p) $ is a normal isolated Du Bois singularity of dimension $n$.
For $1<i<n$, $\mbox{H}_p^{i}(X,\mathcal{O}_X)$ is dual to $\mbox{H}^{n-i}(E, K_E)$ by Matlis duality.
For $i=n$, $\mbox{H}_p^n(X,\mathcal{O}_X)$ is dual to $f_*\mathcal{O}_Y(K_Y+E)$.
In particular, $f_*\mathcal{O}_Y(K_Y+E)\cong K_X$.
\end{theorem}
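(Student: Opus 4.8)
The plan is to run the proof of Theorem \ref{dual} verbatim for the two duality statements, and then to extract the isomorphism $f_*\mathcal{O}_Y(K_Y+E)\cong K_X$ from the top-degree case by one additional comparison of $Ext$ groups.

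First I would reprove the duality. The only place where Theorem \ref{dual} used the log canonical hypothesis was the quasi isomorphism $f_*\mathcal{O}_Y(-E)\cong_{quasi}\mathbf{R}f_*\mathcal{O}_Y(-E)$, which there came from Kov\'{a}cs vanishing; here it is supplied instead by the Du Bois characterization (Theorem 6.1 in \cite{KS11}), which forces $R^if_*\mathcal{O}_Y(-E)=0$ for all $i>0$ via the sequence $0\to\mathcal{O}_Y(-E)\to\mathcal{O}_Y\to\mathcal{O}_E\to 0$ and the isomorphisms $R^if_*\mathcal{O}_Y\cong R^if_*\mathcal{O}_E$. Granting this, the remaining steps are unchanged: push forward $0\to K_Y\to K_Y(E)\to K_E\to 0$ and use Grauert-Riemenschneider vanishing to identify $R^{n-i}f_*\mathcal{O}_Y(K_Y+E)\cong\mbox{H}^{n-i}(E,K_E)$ for $i<n$; apply Grothendieck duality to get $Ext^{-i}(f_*\mathcal{O}_Y(-E),\omega_X^{\bullet})\cong R^{n-i}f_*\mathcal{O}_Y(K_Y+E)$; rewrite the left side by local duality as $Hom(\mbox{H}_p^i(f_*\mathcal{O}_Y(-E)),I)$; and finally replace $f_*\mathcal{O}_Y(-E)$ by $\mathcal{O}_X$ for $i>1$ using $0\to f_*\mathcal{O}_Y(-E)\to\mathcal{O}_X\to\mathcal{O}_p\to 0$ together with $\mbox{H}_p^i(\mathcal{O}_p)=0$ for $i>0$. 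This yields the duality for $1<i<n$ and, taking $i=n$, the statement that $\mbox{H}_p^n(\mathcal{O}_X)$ is Matlis dual to $f_*\mathcal{O}_Y(K_Y+E)$.

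For the ``in particular'' claim I would avoid passing back through local cohomology and instead compare $Ext$ groups directly. Applying $\mathbf{R}Hom(-,\omega_X^{\bullet})$ to $0\to f_*\mathcal{O}_Y(-E)\to\mathcal{O}_X\to\mathcal{O}_p\to 0$ produces a long exact sequence relating $Ext^{\ast}(f_*\mathcal{O}_Y(-E),\omega_X^{\bullet})$, $Ext^{\ast}(\mathcal{O}_X,\omega_X^{\bullet})$ and $Ext^{\ast}(\mathcal{O}_p,\omega_X^{\bullet})$. Since $\mathcal{O}_p$ is the residue field at the closed point and $\omega_X^{\bullet}$ is normalized so that $h^{-n}(\omega_X^{\bullet})=\omega_X$, the complex $\mathbf{R}Hom(\mathcal{O}_p,\omega_X^{\bullet})$ is concentrated in degree $0$; as $X$ is a normal isolated singularity we have $n\ge 2$, so both degrees $-n$ and $-n+1$ miss degree $0$ and the long exact sequence gives $Ext^{-n}(f_*\mathcal{O}_Y(-E),\omega_X^{\bullet})\cong Ext^{-n}(\mathcal{O}_X,\omega_X^{\bullet})$. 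The right side is $h^{-n}(\omega_X^{\bullet})=\omega_X=K_X$ because $X$ is normal, while the left side is $f_*\mathcal{O}_Y(K_Y+E)$ by Equation (\ref{one}) with $i=n$. This gives $f_*\mathcal{O}_Y(K_Y+E)\cong K_X$.

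I expect the main obstacle to be bookkeeping the single degree in which $\mathbf{R}Hom(\mathcal{O}_p,\omega_X^{\bullet})$ lives and confirming that it cannot interfere with degree $-n$; this is exactly where isolatedness (so that $\overline{\{p\}}$ has dimension $0$) and $n\ge 2$ enter, and it is the point that makes the normalization convention for the dualizing complex matter. Conceptually the same conclusion follows by observing that $\mbox{H}_p^n(\mathcal{O}_X)$ is simultaneously the Matlis dual of $f_*\mathcal{O}_Y(K_Y+E)$ and, by local duality applied directly to $\mathcal{O}_X$, of $K_X$; but phrasing it through the double dual would require a completeness hypothesis on the local ring, whereas the $Ext$ comparison above stays clean over the given affine localization.
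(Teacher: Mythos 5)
Your proposal is correct. For the two duality statements it coincides with the paper's own argument: the paper likewise observes that Theorem 6.1 of \cite{KS11} forces $R^if_*\mathcal{O}_Y(-E)=0$ for all $i>0$ (for $i=1$ this also uses surjectivity of $f_*\mathcal{O}_Y\rightarrow f_*\mathcal{O}_E$, i.e.\ connectedness of the reduced fiber $E$, which holds since $X$ is normal), and then simply declares that ``exactly the same proof as in the previous section'' gives the theorem. Where you genuinely diverge is the clause $f_*\mathcal{O}_Y(K_Y+E)\cong K_X$: the paper gives no argument for it at all, only a remark pointing to Ishii (\cite{Ishii}, the Claim in Theorem 2.3) for an independent proof. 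Your $Ext$-comparison --- applying $\mathbf{R}Hom(-,\omega_X^{\bullet})$ to $0\rightarrow f_*\mathcal{O}_Y(-E)\rightarrow\mathcal{O}_X\rightarrow\mathcal{O}_p\rightarrow 0$, noting that $\mathbf{R}Hom(\mathcal{O}_p,\omega_X^{\bullet})$ is concentrated in degree $0$ while the relevant degrees $-n$ and $-n+1$ are negative (here $n\ge 2$), and then identifying $Ext^{-n}(\mathcal{O}_X,\omega_X^{\bullet})=h^{-n}(\omega_X^{\bullet})=\omega_X$ and $Ext^{-n}(f_*\mathcal{O}_Y(-E),\omega_X^{\bullet})\cong f_*\mathcal{O}_Y(K_Y+E)$ via Equation (\ref{one}) --- is a correct, self-contained proof of this clause within the framework the paper has already set up. You are also right to flag the pitfall in the most tempting shortcut: concluding from the fact that $K_X$ and $f_*\mathcal{O}_Y(K_Y+E)$ are both Matlis duals of $\mbox{H}^n_p(\mathcal{O}_X)$ would require Matlis double duality, which over a non-complete local ring only recovers completions; this is exactly the gap that a literal reading of ``the same proof yields'' would leave open. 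In short, your route buys a complete in-house proof of the final isomorphism, while the paper's buys brevity at the cost of outsourcing that statement to the literature.
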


Then the corollaries in the previous section also hold.

\begin{remark}
The last statement has been proved in \cite{Ishii} (the Claim in Theorem 2.3).
\end{remark}

\section{Kov\'{a}cs vanishing theorem}

In this section we follow Fujino's idea to give a simple proof 
of Kov\'{a}cs vanishing theorem.
First we prove a similar result for dlt pair which was proved by the notion 
of rational pair in \cite{Kol13}.
One of the equivalent definitions of dlt singularities is that there is a log resolution (Szab\'{o} resolution \cite{SZA}) $f:Y\rightarrow X$ such that the dicrepancy $a(E; X,\Delta)>-1$ for any exceptional divisor $E$ on $Y$ (Theorem 2.44 in \cite{KM}).

\begin{theorem} \label{kvanish}
Let $(X, \Delta _X)$ be a dlt pair and let $f:Y\rightarrow X$ be a Szab\'{o} resolution.
Then  we can write 
\begin{equation*}
K_Y +\Delta _Y=f^*(K_X+\Delta_X)+A-B,
\end{equation*}
where $A,B$ are effective exceptional  divisors, $\lfloor B \rfloor=0$ and $\Delta _Y$ is the strict transform of $\Delta _X$.
Then for any reduced subset $\Delta ' \subseteq \Delta_Y$, 
 we have
\begin{equation*}
R^if_*\mathcal{O}_Y(-\Delta ')=0 
\end{equation*}
for every $i>0$.
\end{theorem}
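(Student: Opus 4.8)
The plan is to extract the vanishing from Grothendieck duality, trading the badly-signed divisor $A$ on the $\mathcal{O}_Y(-\Delta')$ side for the harmless canonical twist $\omega_Y(\Delta')$ on the dual side. Since $Y$ is smooth, $R\mathcal{H}om_Y(\mathcal{O}_Y(-\Delta'),\omega_Y^{\bullet})\cong\omega_Y(\Delta')[n]$, so Grothendieck duality for $f$ gives
\[
R\mathcal{H}om_X(Rf_*\mathcal{O}_Y(-\Delta'),\omega_X^{\bullet})\cong Rf_*\mathcal{O}_Y(K_Y+\Delta')[n].
\]
Thus $R^if_*\mathcal{O}_Y(-\Delta')=0$ for all $i>0$ is equivalent to the assertion that the complex $Rf_*\mathcal{O}_Y(K_Y+\Delta')$ is a Cohen-Macaulay sheaf concentrated in degree $0$: biduality for bounded complexes with coherent cohomology then forces $Rf_*\mathcal{O}_Y(-\Delta')$ to be a single sheaf, which is the desired vanishing. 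The motivation for this reduction is that $-\Delta'-K_Y$ contains $-A$, an anti-effective exceptional divisor that by the negativity lemma can never sit inside an $f$-nef class, so a direct application of Kawamata-Viehweg to $\mathcal{O}_Y(-\Delta')$ is hopeless; after dualizing, $A$ has disappeared and only the effective simple normal crossing data $K_Y+\Delta'$ remains.

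Next I would establish the vanishing part, $R^if_*\mathcal{O}_Y(K_Y+\Delta')=0$ for $i>0$, by induction on the number of prime components of the reduced divisor $\Delta'$. The base case $\Delta'=0$ is relative Grauert-Riemenschneider, $R^if_*\omega_Y=0$, itself a consequence of Kawamata-Viehweg. For the inductive step, write $\Delta'=\Delta''+D$ with $D$ a prime component and use the residue (adjunction) sequence
\[
0\to\omega_Y(\Delta'')\to\omega_Y(\Delta')\to\omega_D(\Delta''|_D)\to0 .
\]
Because $\Delta'\subseteq\Delta_Y$ is the strict transform of part of $\Delta_X$, each $D$ is smooth and maps birationally onto a divisor in $X$, and by adjunction the induced morphism $f|_D\colon D\to f(D)$ is again a resolution of a pair of the same type in dimension $n-1$. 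Feeding the long exact sequence of $Rf_*$ with the two inductive hypotheses — on the number of components for the $\omega_Y(\Delta'')$ term and on the dimension for the $\omega_D(\Delta''|_D)$ term — annihilates the outer groups and yields the vanishing for $\Delta'$.

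The hard part will be the Cohen-Macaulay statement, namely that $f_*\omega_Y(\Delta')$ is Cohen-Macaulay of dimension $n$, equivalently that its dual $R\mathcal{H}om_X(f_*\omega_Y(\Delta'),\omega_X^{\bullet})$ is concentrated in a single degree. A naive d\'evissage does \emph{not} suffice here: an extension of a Cohen-Macaulay sheaf of dimension $n$ by one of dimension $n-1$ in general drops depth by one, so the residue sequence above fails to propagate the Cohen-Macaulay property. This is precisely the point where the dlt hypothesis enters essentially — it is the incarnation, for the pair $(Y,\Delta')$ over $X$, of the rationality of dlt singularities, the case $\Delta'=0$ being $R^if_*\mathcal{O}_Y=0$, i.e. $f_*\omega_Y$ Cohen-Macaulay. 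I would isolate this as the core lemma and prove it by combining the Grauert-Riemenschneider vanishing just established with local duality on $X$, reducing the Cohen-Macaulayness of $f_*\omega_Y(\Delta')$ to the vanishing of the local cohomology of $Rf_*\mathcal{O}_Y(-\Delta')$ in the complementary range. Once this is in place, the three ingredients assemble into the theorem.
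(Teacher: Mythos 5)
Your Steps 1 and 2 are sound, and they in fact parallel the paper's own framework: the paper also runs the whole argument through Grothendieck duality, and it too needs Reid--Fukuda type vanishing to know that $\mathbf{R}f_*\omega_Y^{\bullet}(\Delta')$ is concentrated in a single degree (your Step 2 is essentially the standard inductive proof of that vanishing). The genuine gap is Step 3, and it is not a technical gap but a circular one. Given Step 2, local/Grothendieck duality makes the following statements literally equivalent: (i) $f_*\omega_Y(\Delta')$ is Cohen--Macaulay of dimension $n$; (ii) $\mathbf{R}f_*\mathcal{O}_Y(-\Delta')$ is concentrated in degree $0$; (iii) the theorem. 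So ``reducing the Cohen--Macaulayness of $f_*\omega_Y(\Delta')$ to the vanishing of the local cohomology of $\mathbf{R}f_*\mathcal{O}_Y(-\Delta')$ in the complementary range'' is not a reduction. If by that local cohomology you mean the data that actually detects CM-ness (local cohomology of the cohomology sheaves), you are reducing the theorem to itself. If you mean the local \emph{hyper}cohomology of the complex $\mathbf{R}f_*\mathcal{O}_Y(-\Delta')$, then its vanishing in degrees $<n$ is indeed true --- it is precisely the Matlis dual of Step 2 --- but it does not imply (ii): in the spectral sequence $H_x^p\bigl(R^qf_*\mathcal{O}_Y(-\Delta')\bigr)\Rightarrow \mathbb{H}_x^{p+q}$, a nonzero $R^{q_0}f_*\mathcal{O}_Y(-\Delta')$ with $q_0>0$, being supported on a small set, contributes in low $p$ and can be killed by a differential into $H_x^{q_0+1}\bigl(f_*\mathcal{O}_Y(-\Delta')\bigr)$, so hypercohomology vanishing controls no individual $R^q$. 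The telltale sign of the circle is that nowhere in your outline does the dlt hypothesis do any actual work; you assert it ``enters essentially'' at this point but give no mechanism.

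What is missing is exactly the device the paper uses to break the circle: the auxiliary exceptional divisor $\lceil A\rceil$. Because $\lceil A\rceil-\Delta'$ can be rewritten as $K_Y-f^*(K_X+\Delta_X)+(\mbox{snc boundary})$ --- this is where dlt-ness genuinely intervenes, through $A$ being effective exceptional and $\lfloor B\rfloor=0$ --- Reid--Fukuda vanishing applies to $\mathcal{O}_Y(\lceil A\rceil-\Delta')$ as well, and Lemma 12 of \cite{KOL} gives $f_*\mathcal{O}_Y(\lceil A\rceil-\Delta')=f_*\mathcal{O}_Y(-\Delta')$; hence the natural map $\beta\colon f_*\mathcal{O}_Y(-\Delta')\to\mathbf{R}f_*\mathcal{O}_Y(\lceil A\rceil-\Delta')$ is a quasi-isomorphism. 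Dualizing, $\beta^*$ factors as
\begin{equation*}
f_*\omega_Y(\Delta'-\lceil A\rceil)\xrightarrow{\ \gamma^*\ } f_*\omega_Y(\Delta')\xrightarrow{\ \alpha^*\ }\mathcal{H}om\bigl(f_*\mathcal{O}_Y(-\Delta'),\omega_X\bigr),
\end{equation*}
a composition of honest sheaf maps by your Step 2, with $\gamma^*$ injective because $\lceil A\rceil$ is effective and with the composite an isomorphism because $\beta$ is; the rank-one trick then forces $\gamma^*$, hence $\alpha^*$, to be isomorphisms, and undualizing gives exactly that $\mathbf{R}f_*\mathcal{O}_Y(-\Delta')$ is the sheaf $f_*\mathcal{O}_Y(-\Delta')$. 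Without this comparison object, or some substitute in which the discrepancy hypothesis concretely acts, your outline cannot close.
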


\begin{proof}
Write 
\begin{equation*}
K_Y- f^*(K_X+\Delta _X)+\Delta _Y=A-B,
\end{equation*}
Then
\begin{equation*}
\lceil A\rceil=K_Y- f^*(K_X+\Delta _X)+\Delta _Y+B+\lceil A\rceil-A,
\end{equation*}
which is  f-exceptional and effective.
Consider the following diagram of complexes, 

\centerline{$\xymatrix{ 
f_*\mathcal{O}_Y(-\Delta ') \ar[r]^{\alpha} \ar[rd]^{\beta}
&\mathbf{R}f_*\mathcal{O}_Y(-\Delta ') \ar[d]^{\gamma}  \\ 
&\mathbf{R}f_*\mathcal{O}_Y(\lceil A\rceil-\Delta ')}$}

Note that 
\begin{equation*}
\lceil A\rceil-\Delta'= K_Y-f^*(K_X+\Delta _X)+\mbox{strict transform}+\delta,
\end{equation*}
where $\delta$ is some effective simple normal crossing divisors such that $\lfloor \delta \rfloor=0$.
So by Reid-Fukuda type vanishing $R^if_*\mathcal{O}_Y(\lceil A\rceil-\Delta')=0$ for $i>0$.
On the other hand, Since $\lceil A\rceil$ is exceptional and $\Delta'$ is strict transform, so $f_*\mathcal{O}_Y(\lceil A\rceil- \Delta' )=f_*\mathcal{O}_Y(-\Delta ')$. (Lemma 12 in \cite{KOL}).
So $\beta$ is a quasi isomorphism. 

Dualize this diagram we have 
 
\centerline{\xymatrix{ 
\mathbf{R}Hom(f_*\mathcal{O}_Y(-\Delta'), \omega_X^{\bullet})  
&\mathbf{R}Hom(Rf_*\mathcal{O}_Y(-\Delta'), \omega_X^{\bullet})  \ar[l]_{\alpha ^*} \\ 
&\mathbf{R}Hom(Rf_*\mathcal{O}_Y(\lceil A\rceil-\Delta'), \omega_X^{\bullet}) \ar[u]\ar[lu]^{\beta ^*}  \ar[u]_{\gamma ^*}
}}

Apply Grothendieck duality we get the following composition,

\centerline{\xymatrix{
  \mathbf{R}f_*\omega_Y^{\bullet}(\Delta'-\lceil A\rceil) \ar[r]^(.55){\gamma^*} \ar@<2pt>@/_1.5pc/[rr]_{\beta^*} &  \mathbf{R}f_*\omega_Y^{\bullet}(\Delta') \ar[r]^(.38){\alpha ^*} & \mathbf{R}\mathcal{H}om(f_*\mathcal{O}_Y(-\Delta'), \omega_X^{\bullet})
}}

By Reid-Fukuda type vanishing, the complex $ \mathcal{R}f_*\omega_Y^
{\bullet}(\Delta')$ has vanishing higher cohomology.
Note that $\beta^*$ is a quasi isomorphism, so it is in fact a composition of sheaf morphisms as following,

\centerline{\xymatrix{
  f_*\omega_Y(\Delta'-\lceil A\rceil) \ar[r]^(.55){\gamma^*} &  f_*\omega_Y(\Delta') \ar[r]^(.38){\alpha ^*} &Hom(f_*\mathcal{O}_Y(-\Delta'), \omega_X)
}}
Since $\lceil A\rceil$ is effective, $\gamma^*$ is injective.
 Because $f_*\omega_Y(\Delta')$ is a rank one sheaf and the composition $\alpha^*\circ \gamma^*$ is an isomorphism,
 $\gamma^*$ is in fact isomorphism.
This implies that $\alpha ^*$ is a quasi isomorphism, so $\alpha: f_*\mathcal{O}_Y(-\Delta ') \rightarrow \mathbf{R}f_*\mathcal{O}_Y(-\Delta ')$ is also a  quasi isomorphism.
That is, $R^if_*\mathcal{O}_Y(-\Delta ')=0, \forall i>0 $.

\end{proof}

With theorem \ref{kvanish}, we can prove Kov\'{a}cs vanishing theorem
following Fujino's idea \cite{OF2}.
Consider the following maps,

\centerline{\xymatrix{
  Y \ar[r]^(.55){h}\ar@/_.8pc/[rr]_{f} &  Z \ar[r]^(.38){g} & X
}}
where $g:(Z, \Delta_Z)\rightarrow (X, \Delta)$ is a dlt modification 
  such that $K_Z+\Delta_Z=g^*(K_X+\Delta)$.
And
$h:Y\rightarrow Z$ is a Szab\'{o} resolution such that 
$K_Y=h^*(K_Z+\Delta_Z)+A-B-\Delta_Y$, where $\Delta_Y=h^{-1}_*\Delta_Z.$

We claim that $R^if_*\mathcal{O}_Y(-\lfloor\Delta _Y\rfloor)=0, \forall i>0$.
By theorem \ref{kvanish}, $R^ih_*\mathcal{O}_Y(-\lfloor\Delta _Y\rfloor)=0, \forall i>0$.
Also note that $h_*\mathcal{O}_Y(\lceil A \rceil-\lfloor\Delta _Y\rfloor)=h_*\mathcal{O}_Y(-\lfloor\Delta _Y\rfloor)=\mathcal{O}_Z(-\lfloor \Delta_Z \rfloor)$. (lemma 12 in \cite{KOL}).
So by Leray spectral sequence, $R^if_*\mathcal{O}_Y(-\lfloor\Delta _Y\rfloor)=R^ig_*\mathcal{O}_Z(-\lfloor \Delta_Z \rfloor)$.
The latter is zero for $i>0$ by Theorem 1-2-5 and Remark 1-2-6  in \cite{KMM}.

Note that $f:Y\rightarrow X$ is not a log resolution.
To fix the problem, we can blow up centers with simple normal crossing 
with Supp$( \Delta_Y +A+B)$.
Say the blow up is $\pi: W\rightarrow Y$.
There are two cases can happen; 
If we blow up klt locus,  it is a Szab\'{o} resolution and then 
the divisor with $-1$ discrepancy is $\Delta _W=\pi^{-1}_*(\Delta _Y)$.
Then $R^i\pi_*\mathcal{O}_W(-\Delta _W)=0$ by theorem \ref{kvanish}.
If we blow up centers inside non-klt locus, then 
the divisor with $-1$ discrepancy may be $\Delta _W=\pi^{-1}_*(\Delta _Y)+F$, where $F$
is the exceptional divisor produced by blow up.
Then  $R^i\pi_*\mathcal{O}_W(-\Delta _W)=0$ by  direct calculation.
In any case we showed that the higher direct image is not changed by these two kinds of blowing up.
So we can conclude Kov\'{a}cs vanishing theorem.

\bibliographystyle{amsalpha}

\end{document}